\documentclass[11pt]{amsart}    

\usepackage{graphicx}
\usepackage{color}
\usepackage{enumerate}
\usepackage{amsfonts,amsmath,latexsym,amssymb,amsthm}
\usepackage{hyperref}
\usepackage{geometry}
\usepackage{txfonts}
\usepackage{color}
\definecolor{Myred}{cmyk}{0.0,1.0,1.0,0.00}

\newtheorem{claim}{Claim}[section]
\newtheorem{theorem}[claim]{Theorem}
\newtheorem{lemma}[claim]{Lemma}

\begin{document}

\title[Magnetic Dirichlet Laplacian on deformed waveguides]
{Magnetic Dirichlet Laplacian on deformed waveguides}

\author{Daniel Alpay} 
\address{Faculty of Mathematics, Physics, and Computation\\Schmid College of Science and Technology, Chapman University\\ One University Drive Orange, California 92866, USA}
\email{alpay@chapman.edu}
\author{Diana Barseghyan(Schneiderov\'{a})} 
\address{Department of Mathematics, University of Ostrava, 30. dubna 22, 70103 Ostrava, Czech Republic}
\email{diana.schneiderova@osu.cz}
\author{Baruch Schneider} 
\address{Department of Mathematics, University of Ostrava, 30. dubna 22, 70103 Ostrava, Czech Republic}
\email{baruch.schneider@osu.cz}

\keywords{Magnetic Dirichlet Laplacian, perturbed waveguide, discrete spectrum, essential spectrum}
\subjclass[2010]{35J15; 35P15; 81Q10}

\maketitle

\begin{abstract}
It is well known that the spectrum of the Dirichlet Laplacian for a two-dimensional waveguide, which is a local deformation of a straight strip, is unstable with respect to waveguide boundary deformations.  This means that, when the waveguide is a straight strip, the spectrum of the Dirichlet Laplacian is purely essential. On the other hand, local boundary perturbations of the straight strip produce eigenvalues below the essential spectrum.
This paper considers the Dirichlet-Laplace operator with a compactly supported magnetic field. Furthermore, we omit the condition that the boundary perturbation is local.  We prove that, in this case, the spectrum of the magnetic Laplacian is stable under small deformations of the waveguide boundary.
\end{abstract}
\bigskip

%%%%%%%%%%%%%%%%%%%%%%%%%%%%%%%%%%%%%%%
\section{Introduction} \label{s: intro}
\setcounter{equation}{0}

The investigation of the dynamics of quantum particles confined to tubular regions yielded interesting results, the most unexpected of which concerned localized states that owed their existence exclusively to the geometry of the confinement
\cite{EK15}. This discovery influenced the much older and more developed theory of electromagnetic waveguides \cite{LCM99} from
the simple reason that, at least in some circumstances, the corresponding Maxwell
equations are well approximated by the same Helmholtz equation encountered in quantum mechanics. 
Now, let us describe in more detail how the existence of bound states depends on the geometry of the region. 
It has long been known that an appropriate bending of a two
dimensional quantum waveguidein an appropriate way induces the existence of bound states (see, for example, references
\cite{ES89}, \cite{GJ92}, and \cite{DE95}). From a mathematical perspective, this implies that the Dirichlet Laplacian on a smooth asymptotically straight, but locally bent, planar waveguide has at least one isolated eigenvalue below the threshold of the essential spectrum. Similar results have been obtained for a locally deformed waveguide, which corresponds to adding a small ``bump'' to the straight waveguide, see \cite{BGRS97} and \cite{BEGK01}. 

We will describe in more detail the particular method of creating the eigenvalues by making the suitable boundary deformation of the straight tube without bending it. Let $f$ be a non-negative, smooth function that vanishes at infinity. We construct
\begin{equation}\label{domain}
\Omega_f = \left\{(x, y) \in \mathbb{R}^2 \,\,\text{such that}\,\,0 < y < \pi +
 \pi f(x)\right\}.
\end{equation}

As mentioned in the cited work \cite{BGRS97}, if $f$ is a smooth, compactly supported function then the essential spectrum of the Dirichlet Laplacian on $\Omega_f$ coincides with the half-line $[1,\infty)$. On the other hand, if $f$ is non-zero, then the discrete spectrum below $1$ is always non-empty. 

The situation changes when a magnetic field is present.
Let $H_{\Omega_f}(\mathcal{A})$ be the Friedrich's extension of the symmetric, semi-bounded operator
\begin{equation}\label{operator}
q_{\Omega_f}^{\mathcal{A}}[\psi]=\|(i\nabla + \mathcal{A})\psi\|^2_{L^2(\Omega_f)},\quad \psi \in \mathcal{H}_0^1(\Omega_f),\end{equation}
where the real-value function $\mathcal{A}$ is a vector potential. Throughout this work, we will suppose that the magnetic vector potential, $\mathcal{A}$, is compactly supported. 
According to the first representation theorem \cite{K66}, there is a unique, nonnegative self-adjoint operator associated with this form:
\begin{equation}\label{1}H_{\Omega_f}(\mathcal{A}) = (i\nabla + \mathcal{A})^2.\end{equation}  

A similar operator appeared in \cite{EK05}. The authors established that if $f$ is a smooth, compactly supported function, then the essential spectrum of $H_{\Omega_f}(\mathcal{A})$ is unchanged and coincides with the half-line $[1,\infty)$. However, if $f$ is small enough, then the discrete spectrum of $H_{\Omega_f}(\mathcal{A})$ is empty due to the presence of the magnetic field.

It should be noted that similar questions regarding the magnetic Dirichlet Laplacian defined on a tube which is a straight tube outside of some compact set were also considered in \cite{KR14}.

The effect of the magnetic field on the discrete spectrum was discussed for different questions in \cite{BBS24}, \cite{BBSZ25} and \cite{BE21}.   

Our work addresses a different problem. We consider the magnetic Dirichlet Laplacian, $H_{\Omega_f}(\mathcal{A})$, on the domain $\Omega_f$, but we do not assume that $f$ must be compactly supported.  We will establish a necessary condition for $f$ that guarantees the stability of the essential spectrum and also the absence of the discrete spectrum due to the magnetic field.
In the conclusion, we discuss the class of perturbations for which we prove that the non-magnetic operator 
$H_{\Omega_f}(0)$ has a non-empty discrete spectrum below the threshold of its essential spectrum. 
 
\section{Main results} \label{s: intro}
\setcounter{equation}{0}

We will prove that the essential spectrum is stable and that there is no discrete spectrum below the threshold of the essential spectrum of the operator (\ref{1}). First, we will address the absence of the discrete spectrum due to a magnetic field. As a second result, we will show that the essential spectrum of the operator (\ref{1}) coincides with the half-line $[1, \infty)$.

\subsection{Absence of the discrete spectrum}

Throughout our work, we assume the following condition on $f$: the first, second, and third derivatives of $f$ exist, as do the first and second derivatives of $|f'|$. Furthermore, 
there exists a positive constant $\alpha$ such that for all $x\in \mathbb{R}$

\begin{equation}\label{(a)}
|f(x)|\le \frac{\alpha}{1+x^2}\,,\quad |f'(x)|\le \frac{\alpha}{1+x^2}\,, \quad |f''(x)|\le \frac{\alpha}{1+x^2}
\,,\quad |f'''(x)|\le \frac{\alpha}{1+x^2}\,.
\end{equation}

\begin{theorem}\label{absence}
Let $B= \mathrm{curl}(\mathcal{A}) \in C_0^1(\mathbb{R}^2)$ be a real-valued magnetic field which is non-trivial
in $\Omega_f$. Suppose the validity of the assumptions (\ref{(a)}). Then there exists $\alpha_0>0$ such that for all $\alpha< \alpha_0$ the discrete spectrum of the operator (\ref{1}) below $[1, \infty)$ is empty. 
\end{theorem}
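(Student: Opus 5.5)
We prove the claim by showing that the quadratic form $q_{\Omega_f}^{\mathcal{A}}[\psi]-\|\psi\|^2_{L^2(\Omega_f)}$ is nonnegative on $\mathcal{H}_0^1(\Omega_f)$ whenever $\alpha$ is small. By the min--max principle this rules out spectrum below $1$. The argument has three steps: a change of variables to a straight strip, a magnetic Hardy inequality there, and an estimate of the error terms produced by the change of variables.

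\emph{Step 1: straightening the domain.} We map $\Omega_f$ onto the straight strip $\Omega_0=\mathbb{R}\times(0,\pi)$ by
\begin{equation*}
(x,y)=\bigl(s,\,(1+f(s))t\bigr).
\end{equation*}
We then perform the unitary transformation $\psi\mapsto \phi=\sqrt{1+f(s)}\,\psi$, which removes the Jacobian weight. The transformed form reads
\begin{equation*}
\tilde q[\phi]=\int_{\Omega_0}\Bigl(|(i\partial_t+\tilde A_2)\phi|^2\,(1+f)^{-2}\bigl(1+t^2f'^2\bigr)+|(i\partial_s+\tilde A_1)\phi|^2\Bigr)\,ds\,dt+\mathcal{R}[\phi].
\end{equation*}
Here $\tilde A$ is the transformed potential; its curl is the pulled-back field, and it is still compactly supported. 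The remainder $\mathcal{R}[\phi]$ collects two kinds of terms:
\begin{itemize}
\item the mixed terms $\pm 2\,\mathrm{Re}\,\bigl(t f'(1+f)^{-1}\,\overline{(i\partial_s+\tilde A_1)\phi}\,(i\partial_t+\tilde A_2)\phi\bigr)$;
\item potential terms involving $f'$, $f''$, $f'^2$, which arise from differentiating $\sqrt{1+f}$ and integrating by parts in $s$.
\end{itemize}
The integrations by parts that move derivatives off $\phi$ are where $f'''$ and the derivatives of $|f'|$ in the hypotheses enter. By (\ref{(a)}), every coefficient in $\mathcal{R}$ is bounded by $C\alpha(1+s^2)^{-1}$, at least for $\alpha$ small.

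\emph{Step 2: magnetic Hardy inequality on the straight strip.} This is the key ingredient. Since $B\neq0$ on $\Omega_f$ and hence $\mathrm{curl}\,\tilde A\neq0$ in $\Omega_0$, we aim for an inequality of the form
\begin{equation*}
\int_{\Omega_0}|(i\nabla+\tilde A)\phi|^2-\int_{\Omega_0}|\phi|^2\ \ge\ c\int_{\Omega_0}\frac{|\phi|^2+|(i\partial_t+\tilde A_2)\phi|^2-|\phi|^2}{1+s^2}\,ds\,dt,
\end{equation*}
with $c>0$ depending only on $B$. The right-hand side should at least control $\int|\phi|^2/(1+s^2)$, and, by using a portion of the transverse form, also the transverse excess. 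This follows the strategy of Ekholm--Kova\v{r}\'{\i}k \cite{EK05}:
\begin{itemize}
\item A local estimate near the support of the field: the lowest eigenvalue of the magnetic operator on a bounded segment $(-a,a)\times(0,\pi)$, with Neumann conditions in $s$, is strictly greater than $1$. Otherwise the ground state would be gauge-equivalent to the constant-in-$s$ function $\sin t$, which forces the field to vanish.
\item A one-dimensional Hardy inequality in $s$ away from the segment, applied to $\phi-\langle\phi,\sin t\rangle\sin t$ together with the component along $\sin t$.
\item A partition of unity in $s$ to glue the two pieces.
\end{itemize}
To control the $s$-derivative weights appearing in $\mathcal{R}$, I would retain a fraction of $|(i\partial_s+\tilde A_1)\phi|^2$ rather than spending all of it on the Hardy inequality.

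\emph{Step 3: absorbing the perturbation.} We now compare the transformed form $\tilde q$ with the unperturbed one.
\begin{itemize}
\item The transverse factor satisfies $(1+f)^{-2}(1+t^2f'^2)\ge 1-C\alpha(1+s^2)^{-1}$, so the perturbation of the transverse term is at most $C\alpha$ times $(1+s^2)^{-1}$ times the transverse term.
\item The mixed terms are handled by Cauchy--Schwarz with weights: they are bounded by
\begin{equation*}
\epsilon\int|(i\partial_s+\tilde A_1)\phi|^2+\frac{C\alpha^2}{\epsilon}\int\frac{|(i\partial_t+\tilde A_2)\phi|^2}{(1+s^2)^2}.
\end{equation*}
The first piece is absorbed into the retained longitudinal part. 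For the second, we write the transverse term as $|\phi|^2$ plus the nonnegative transverse excess; since $\phi$ vanishes at $t=0,\pi$, the excess is controlled fibrewise.
\end{itemize}
In every case the error is bounded by $C\alpha$ times the right-hand side of the Hardy inequality plus a small multiple of the kinetic term. Choosing $\alpha_0$ so that $C\alpha_0<c$ then gives $\tilde q[\phi]\ge\|\phi\|^2$, and hence $\sigma_{\rm disc}\cap(-\infty,1)=\emptyset$.

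\emph{Main obstacle.} I expect the hardest part to be the interplay between Steps 2 and 3. The mixed term in $\mathcal{R}$ contains the transverse derivative, which is not small compared with $\|\phi\|^2$. The Hardy weight must therefore control the weighted transverse term $\int (1+s^2)^{-1}|(i\partial_t+\tilde A_2)\phi|^2$, and not only $\int(1+s^2)^{-1}|\phi|^2$. I would first try to obtain this by proving the Hardy inequality for $\theta\,\tilde q_0$ with $\theta<1$, and using the leftover $(1-\theta)\bigl(\|(i\partial_t+\tilde A_2)\phi\|^2-\|\phi\|^2\bigr)\ge0$ on the orthogonal complement of $\sin t$. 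On the component along $\sin t$ the transverse derivative equals that of $\sin t$ and is bounded by $|\phi|$, so the weighted transverse term is controlled there as well.
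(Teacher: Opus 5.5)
Your proof is correct. It shares the paper's skeleton: straighten to $\Omega_0$, use a magnetic Hardy inequality with weight $(1+s^2)^{-1}$, and take $\alpha$ small. The way the errors are absorbed, however, is different.

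\emph{The paper's route.} It composes the potential naively, $\tilde a_j(x,y)=a_j(x,g(x)y)$. This produces the extra $\tilde a_1,\tilde a_2$ cross terms in $I(\varphi)$ and forces the bounded gauge of Lemma~\ref{app.}. It then bounds the whole remainder crudely by $G_f^1|\nabla\varphi|^2+G_f^2|\varphi|^2$ and converts back to the magnetic gradient. What remains is $\int(1-2G_f^1)(|i\nabla\varphi+\tilde A\varphi|^2-|\varphi|^2)$, which needs the \emph{weighted} Hardy inequality of Lemma~\ref{lemma1} with $h=\sqrt{1-2G_f^1}$. The resulting $h''$ is the only reason for the hypotheses on $f'''$ and on two derivatives of $|f'|$.

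\emph{Your route.} You pull $\mathcal A$ back as a one-form: $\tilde A_1=a_1+tg'a_2$ and $\tilde A_2=ga_2$, evaluated at $(s,g(s)t)$. The straightened form therefore stays gauge-covariant, with field $g(s)B(s,g(s)t)$. Every error is expressed through $(i\partial_s+\tilde A_1)\phi$, $(i\partial_t+\tilde A_2)\phi$ and the potential $\frac{g''}{2g}-\frac{3g'^2}{4g^2}$. You then need only three things:
\begin{enumerate}
\item the unweighted Ekholm--Kova\v{r}\'{\i}k inequality;
\item its upgrade to control $\int(1+s^2)^{-1}|(i\partial_t+\tilde A_2)\phi|^2$;
\item a retained fraction of the longitudinal energy to absorb the mixed term.
\end{enumerate}
Consequently your argument uses only $f,f',f''=O(\alpha(1+s^2)^{-1})$. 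Your remark that $f'''$ and $|f'|''$ enter through your integrations by parts is inaccurate: they never appear in your scheme. What the paper's route buys is mainly brevity (one crude pointwise bound on $I(\varphi)$ and one cited lemma), at the price of stronger regularity assumptions.

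The upgrade you single out as the main obstacle is simpler than your projection onto $\sin t$. Put $Q[\phi]=\|(i\nabla+\tilde A)\phi\|^2-\|\phi\|^2$ and $E(s)=\int_0^\pi\bigl(|(i\partial_t+\tilde A_2)\phi|^2-|\phi|^2\bigr)\,dt$. Then $E\ge0$ fibrewise, by a one-dimensional gauge transformation and the Dirichlet bound on $(0,\pi)$. It follows that $Q[\phi]\ge\int E\,ds\ge\int(1+s^2)^{-1}E\,ds$. Averaging this with the plain Hardy bound gives the weighted transverse control directly.

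One point you should make explicit. Your Hardy constant belongs to the pulled-back field, which depends on $f$, so the claim that $c$ depends ``only on $B$'' needs an argument. One option: the local Neumann eigenvalue on $(-a,a)\times(0,\pi)$ depends continuously on the potential in $L^\infty$. This yields a constant uniform for small $\alpha$, provided $B\not\equiv0$ on $\Omega_0$. The paper never faces this issue, because it fixes the straightened potential $\tilde A$ first and treats $C_{\tilde A}$ and $\|\tilde a_j\|_\infty$ as given; in your formulation, with $B$ prescribed on $\Omega_f$, you must address it.
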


\begin{proof}

The idea of the proof is based on the rewriting of the quadratic form $q_{\Omega_f}^{\mathcal{A}}$ to another one defined on the straight strip $\Omega_0= \mathbb{R}\times (0, \pi)$. Let us denote 
\begin{equation}\label{g}g(x) = 1 + f(x)\,.
\end{equation}

For any $\psi\in \mathcal{H}^1_0(\Omega_f)$ the form (\ref{operator}) is the following
\begin{equation}\label{first form}
q_{\Omega_f}^{\mathcal{A}}[\psi]= \int_{\Omega_f} \left| i \psi_x(x, y) + a_1(x, y) \psi(x, y)\right|^2
\,d x\,d y\\+ \int_{\Omega_f}  \left|i \psi_y(x, y)+ a_2(x, y)\psi(x, y)\right|^2\,d x\,d y\,,
\end{equation}
where $\mathcal{A}(x, y)= (a_1(x, y), a_2(x, y))$.

We define the transformation $\varphi(x, y)=(g(x))^{1/2}\psi (x, g(x)y)\in \mathcal{H}^1_0(\Omega_0)$. It is easy to check that 

\begin{eqnarray*}
\psi_x(x, g(x)y)= \frac{1}{\sqrt{g(x)}}\varphi_x(x, y)- \frac{g'(x)}{2g(x)\sqrt{g(x)}}\varphi(x, y)- \frac{g'(x) y}{g(x)\sqrt{g(x)}}\varphi_y(x, y)\,,\\
\psi_y(x, g(x)y)=\frac{1}{ g(x) \sqrt{g(x)}} \varphi_y(x, y)\,.
\end{eqnarray*}

Then (\ref{first form}) can be transformed to
\begin{eqnarray*}\nonumber
q_{\Omega_f}^{\mathcal{A}}[\psi]= \int_{\Omega_0} g(x)\left| i \psi_x(x, g(x)y) + a_1(x, g(x)y) \psi(x, g(x)y)\right|^2
\,d x\,d y\\+ \int_{\Omega_0} g(x) \left|i \psi_y(x, g(x)y)+ a_2(x, g(x)y)\psi(x, g(x)y)\right|^2\,d x\,d y\\\nonumber
= \int_{\Omega_0} \left|i \varphi_x(x, y) - \frac{i g'(x)}{2g(x)}\varphi(x, y)- \frac{i g'(x) y}{g(x)} \varphi_y(x, y)+ \tilde{a}_1(x, y)\varphi(x, y)
\right|^2\,d x\,d y\\\nonumber
+ \int_{\Omega_0}\left|\frac{i}{g(x)}\varphi_y(x, y) + \tilde{a}_2(x, y)\varphi(x, y)\right|^2\,d x\,d y\,,
\end{eqnarray*}
where
\begin{equation} \label{modAB}
\tilde a_1(x, y)=  a_1(x, g(x)y),\quad\text{and}\quad \tilde a_2(x, y) = a_2(x, g(x)y).
\end{equation}

In view of the following identity obtained by integration by parts  
$$\int_{\Omega_0}\left(\frac{g'}{g}\right)^2 y (\varphi \overline{\varphi_y}+ \varphi_y \overline{\varphi})\,d x\,d y=- \int_{\Omega_0}\left(\frac{g'}{g}\right)^2 |\varphi|^2\,d x\,d y\,,$$
we get  
\begin{eqnarray} \nonumber
q_{\Omega_f}^{\mathcal{A}}[\psi]
= \int_{\Omega_0} \bigg( \left| -i \varphi_x + \tilde a_1 \varphi
\right|^2 + \left| -i \varphi_y + \tilde a_2 \varphi
\right|^2 
 -\frac{g'}{2g} (\overline{\varphi_x} \varphi+ \varphi_x \overline{\varphi})\\\nonumber -
\frac{g'}{g} y (\overline{\varphi_x} \varphi_y+ \varphi_x \overline{\varphi_y})+ i \frac{\tilde a_1 g'}{g} y
(\varphi_y \overline{\varphi}- \overline{\varphi_y} \varphi)
- \left(\frac{g'}{2g}\right)^2 |\varphi|^2 \\\nonumber+ \left(\frac{g'}{g}\right)^2 y^2 |\varphi_y|^2-
\left(1- \frac{1}{g^2}\right) |\varphi_y|^2+ i \tilde a_2\left(1- \frac{1}{g}\right) (\varphi_y \overline{\varphi}- \overline{\varphi_y} \varphi)
\biggr)\,d x\,d y\\\label{q}= \int_{\Omega_0} \left| i \nabla \varphi + \tilde A \varphi\right|^2 \,d x\,d y- I(\varphi)\,, 
\end{eqnarray}
where $\tilde A(x, y)= (\tilde a_1(x, y), \tilde a_2(x, y))$ and 
\begin{eqnarray*}
I(\varphi)=\int_{\Omega_0} \bigg(\frac{g'}{2g} (\overline{\varphi_x} \varphi+ \varphi_x \overline{\varphi}) +
\frac{g'}{g} y (\overline{\varphi_x} \varphi_y+ \varphi_x \overline{\varphi_y})- i \frac{\tilde a_1 g'}{g} y
(\overline{\varphi_y} \varphi-\varphi_y \overline{\varphi})
+ \left(\frac{g'}{2g}\right)^2 |\varphi|^2 \\\nonumber- \left(\frac{g'}{g}\right)^2 y^2 |\varphi_y|^2+
\left(1- \frac{1}{g^2}\right) |\varphi_y|^2+ i \tilde a_2 \left(1- \frac{1}{g}\right) (\varphi_y \overline{\varphi}- \overline{\varphi_y} \varphi)
\biggr)\,d x\,d y\,.
\end{eqnarray*}

Before to continue the proof need the following lemma (the proof of which we present in the Appendix):

\begin{lemma}\label{app.}
Let $\hat B \in C_0^1(\mathbb{R}^2)$ be a real-valued magnetic field compactly supported in $\Omega_0$. Then one can choose the magnetic potential $A= (\hat a_1, \hat a_2)$ corresponding to $\hat B$ in such a way that $\hat a_1,\, \hat a_2\in L^\infty(\Omega_0)$. 
\end{lemma}

Let us choose $\tilde a_1$ and $\tilde a_2$ satisfying the properties described in Lemma \ref{app.}. Correspondingly the original magnetic potential will be obtained by relations (\ref{modAB}).
In view of this and using expression (\ref{g})  it is easy to check that $I(\varphi)$ can be estimated as follows
\begin{eqnarray}\nonumber
|I(\varphi)|\le \frac{(1+ 2\pi)}{2\|g\|_\infty} \int_{\Omega_0}|f'| |\varphi_x|^2\,d x\,d y+  \frac{\pi}{\|g\|_\infty}  \int_{\Omega_0} |f'|\left(1+ \|\tilde a_1\|_\infty+\frac{|f'| \pi}{\|g\|_\infty}\right) |\varphi_y|^2\,d x\,d y\\\nonumber+ \int_{\Omega_0}\left(2f -\frac{f^2(3+ 2f)}{\|g\|_\infty^2}
+\frac{\|\tilde a_2\|_\infty \cdot f}{\|g\|_\infty}\right)|\varphi_y|^2\,d x\,d y+
 \frac{1}{2\|g\|_\infty} \int_{\Omega_0}|f'| \left(1 +2\|\tilde a_1\|_\infty \pi+ \frac{|f'|}{2\|g\|_\infty}\right)|\varphi|^2\,d x\,d y\\\nonumber+\frac{1}{\|g\|_\infty} \int_{\Omega_0} \|\tilde a_2\|_\infty\cdot f\cdot |\varphi|^2\,d x\,d y\\
 \label{I}\le \int_{\Omega_0} G_f^1 (|\varphi_x|^2+ |\varphi_y|^2)\,d x\,d y+ \int_{\Omega_0} G_f^2 |\varphi|^2\,d x\,d y\,,
\end{eqnarray}
where $\|\cdot\|\equiv \|\cdot\|_{L^\infty(\mathbb{R})}$ or $\|\cdot\|\equiv \|\cdot\|_{L^\infty(\mathbb{R}^2)}$ and 
\begin{eqnarray}\label{Gf1}
G_f^1= \frac{(1+ 2\pi)}{2\|g\|_\infty}|f'|+ \frac{\pi |f'|}{\|g\|_\infty}\left(1+ \|\tilde a_1\|_\infty +\frac{|f'| \pi}{\|g\|_\infty}\right)+ 2f- \frac{ f^2(3+ 2f)}{\|g\|_\infty^2}+ \frac{\|\tilde a_2\|_\infty f}{\|g\|_\infty}\,,\\\label{Gf}
G_f^2= \frac{1}{2\|g\|_\infty}|f'| \left(1+ 2\|\tilde a_1\|_\infty \pi+ \frac{|f'|}{2\|g\|_\infty}\right)+\frac{\|\tilde a_2\|_\infty f}{\|g\|_\infty}\,.
\end{eqnarray}

Using the following pointwise inequality 
$$
\left|\nabla \varphi\right|^2\le 2\left|i\nabla \varphi+ \tilde{A}\varphi\right|^2 + 2\left(\tilde a_1^2+ \tilde a_2^2\right)
|\varphi|^2\,, \quad \varphi\in \mathcal{H}^1(\Omega_0)\,,
$$
the integral $\int_{\Omega_0}G_f^1 (|\varphi_x|^2+ |\varphi_y|^2)\,d x\,d y$ can be estimated as follows

$$
\int_{\Omega_0}G_f^1 (|\varphi_x|^2+ |\varphi_y|^2)\,d x\,d y\le \int_{\Omega_0} 2 G_f^1 (|i \nabla \varphi+ \tilde A \varphi|^2+ (\|\tilde a_1\|_\infty^2+ \|\tilde a_2\|_\infty^2)|\varphi|^2)\,d x\,d y\,.
$$

Finally using (\ref{q}), (\ref{I}) and the above estimate we have

\begin{equation}\label{fin.}
q_{\Omega_f}^{\mathcal{A}}[\psi]\ge \int_{\Omega_0} (1- 2 G_f^1 )\left| i \nabla \varphi + \tilde A \varphi\right|^2 \,d x\,d y-  2(\|\tilde a_1\|_\infty^2+ \|\tilde a_2\|_\infty^2) \int_{\Omega_0}  G_f^1  |\varphi|^2\,d x\,d y-
 \int_{\Omega_0}  G_f^2  |\varphi|^2\,d x\,d y
\,.
\end{equation}

For the further proof we need the following Hardy-type inequality established in work \cite{BBSZ25}.
\begin{lemma}\label{lemma1}
For any function $g\in \mathcal{H}_0^1(\Omega_0)$, the following estimate holds    
 \begin{eqnarray*}
\int_{\Omega_0}\left(h^2 \left|i \nabla g+ \tilde{A} g\right|^2- h^2|g|^2\right)\,d x\,d y\ge C_{\tilde{A}}\int_{\Omega_0}\frac{h^2}{1+x^2} |g|^2\,d x\,d y+ \int_{\Omega_0}h'' |g|^2\,d x\,d y\,,
\end{eqnarray*}
where $C_{\tilde{A}}>0$ is a constant depending on the magnetic field and $h= h(x):\mathbb{R}\to (0, \infty)$ is a smooth function. 
\end{lemma}

In view of assumptions of Theorem \ref{absence} function $h= \sqrt{1- 2 G_1^f}$ has first and second derivatives.
Next, using (\ref{fin.}) and the above lemma applied with $h$ one gets
\begin{eqnarray}\nonumber
q_{\Omega_f}^{\mathcal{A}}[\psi]- \|\psi\|^2_{L^2(\Omega_f)}\\\nonumber\ge \int_{\Omega_0} (1- 2 G_1^f) |i \nabla \varphi+ \tilde A \varphi|^2\,d x\,d y-
\int_{\Omega_0} 2 G_f^1(\|\tilde a_1\|_\infty^2+ \|\tilde a_2\|_\infty^2)|\varphi|^2\,d x\,d y
- \int_{\Omega_0} G_f^2 |\varphi|^2\,d x\,d y- \int_{\Omega_0} |\varphi|^2\,d x\,d y\\\nonumber
\ge C_{\tilde{A}}\int_{\Omega_0}\frac{ (1- 2G_f^1)}{1+x^2} |\varphi|^2\,d x\,d y+ \int_{\Omega_0} ((1- 2G_f^1)^{1/2})'' |\varphi|^2\,d x\,d y- \int_{\Omega_0} 2G_f^1 |\varphi|^2\,d x\,d y \\\label{last}-
(\|\tilde a_1\|_\infty^2+ \|\tilde a_2\|_\infty^2) \int_{\Omega_0} 2G_f^1|\varphi|^2\,d x\,d y- \int_{\Omega_0} G_f^2 |\varphi|^2\,d x\,d y\,.
\end{eqnarray}

In view of the expressions (\ref{Gf1})- (\ref{Gf}) and assumptions (\ref{(a)}) one can to show that 
 $$
 |G_f^1(x)|,\, |(G_f^1)'(x)|,\,|(G_f^1)''(x)|,\, |G_f^2(x)| \le \frac{C \alpha}{1+ x^2}\,, x\in \mathbb{R}\,,
$$
where the constant $C$ depends only on $\|g\|_\infty,\, \|\tilde a_1\|_\infty,\, \|\tilde a_1\|_\infty$.

Hence one can choose $\alpha$ small enough to guarantee that the expression
 
\begin{eqnarray*}\label{final}
-C_{\tilde{A}}\frac{2 G_f^1}{1+x^2} + ((1- 2G_f^1)^{1/2})''- 2 G_f^1-
2 G_f^1(\|\tilde a_1\|_\infty^2+ \|\tilde a_2\|_\infty^2)-  G_f^2\\\nonumber= -C_{\tilde{A}}\frac{2 G_f^1}{1+x^2} - \frac{1}{(1- 2G_f^1)^{3/2}}
\left((G_f^1)'' (1- 2 G_f^1)+ ((G_f^1)')^2\right)- 2G_f^1-
2G_f^1(\|\tilde a_1\|_\infty^2+ \|\tilde a_2\|_\infty^2)-  G_f^2
\end{eqnarray*}
will not exceed   $\frac{C_{\tilde{A}}}{1+ x^2}$,  which together with (\ref{last}) means that 
$$
q_{\Omega_f}^{\mathcal{A}}[\psi]- \|\psi\|^2_{L^2(\Omega_f)}\ge 0\,.
$$
The above inequality establishes our proof.

\end{proof}

\subsection{Stability of the essential spectrum}

In this subsection we establish that the compactly supported magnetic field cannot change the essential spectrum as it has been shown in the following theorem:

\begin{theorem}\label{stability}
Suppose the assumptions of Theorem \ref{absence} but this time allowing the magnetic field to be zero. 
Then the essential spectrum of the operator (\ref{1}) coincides with the half-line $[1, \infty)$.
\end{theorem}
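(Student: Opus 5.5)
We prove the two inclusions $[1,\infty)\subset\sigma_{\rm ess}(H_{\Omega_f}(\mathcal{A}))$ and $\sigma_{\rm ess}(H_{\Omega_f}(\mathcal{A}))\subset[1,\infty)$ separately. Both use the unitary change of variables $\psi\mapsto\varphi=g^{1/2}\psi(x,g(x)y)$ from the proof of Theorem \ref{absence}, which carries $L^2(\Omega_f)$ isometrically onto $L^2(\Omega_0)$ and turns the form into (\ref{q}). Since $\mathcal{A}$ is compactly supported, we fix a large $R$ such that $\operatorname{supp}\mathcal{A}\subset\{|x|<R\}$.

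\emph{Inclusion $[1,\infty)\subset\sigma_{\rm ess}$.} We use a Weyl singular sequence. For $\lambda=1+k^2$ we take
\[
\psi_n(x,y)=n^{-1/2}\chi\bigl((x-x_n)/n\bigr)\,e^{ikx}\,\sin\bigl(y/g(x)\bigr),
\]
where $\chi\in C_0^\infty(\mathbb{R})$ and $x_n\to\infty$ is chosen so that the supports are disjoint and lie in $\{x>R+n\}$. On these supports $\mathcal{A}=0$, and $f,f',f''$ are $O(x_n^{-2})$ by (\ref{(a)}). The function $\psi_n$ satisfies the Dirichlet conditions on both boundaries. We compute
\[
-\Delta\psi_n-\lambda\psi_n ,
\]
and the error terms come from derivatives of $\chi(\cdot/n)$, which are $O(1/n)$, and from derivatives of $1/g$, which are $O(x_n^{-2})$. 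The $y$-eigenvalue equals $1/g^2=1+O(x_n^{-2})$. Hence $\|(H-\lambda)\psi_n\|/\|\psi_n\|\to0$. The functions $\psi_n$ converge weakly to zero because their supports are disjoint, so $\lambda\in\sigma_{\rm ess}$. This step is routine, since $\psi_n$ lies in the operator domain.

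\emph{Inclusion $\sigma_{\rm ess}\subset[1,\infty)$.} This is the harder direction, and we handle it by Neumann bracketing or an IMS localization. We introduce a smooth partition of unity $\chi_1^2+\chi_2^2=1$ in $x$, with $\chi_1$ supported in $|x|<L+1$ and $\chi_2$ supported in $|x|>L$, and $L\ge R$. The IMS formula gives
\[
q[\psi]=q[\chi_1\psi]+q[\chi_2\psi]-\||\nabla\chi_1|\psi\|^2-\||\nabla\chi_2|\psi\|^2 .
\]
We make the gradient terms $\le\varepsilon\|\psi\|^2$ by stretching the cutoffs over a length $\sim1/\varepsilon$, so $\chi_1$ is supported in a bounded region of size $L_\varepsilon$. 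The piece $\chi_1\psi$ lives in a bounded subdomain of $\Omega_f$ with Dirichlet conditions on the cut, so its form has compact resolvent and contributes only a finite-rank perturbation. For the piece $\chi_2\psi$, which is supported where $\mathcal{A}=0$, we use the transformed form (\ref{q}) with $\tilde A=0$. It gives
\[
q[\chi_2\psi]\ge\int(1-2G_f^1)|\nabla\varphi|^2-\int G_f^2|\varphi|^2 ,
\]
where $\varphi$ is the transform of $\chi_2\psi$. Since $G_f^1,G_f^2\le C\alpha/(1+x^2)\le C\alpha/L^2$ there, the transverse Poincaré inequality $\int|\varphi_y|^2\ge\int|\varphi|^2$ yields
\[
q[\chi_2\psi]\ge(1-\delta(L))\|\chi_2\psi\|^2,\qquad \delta(L)\to0 .
\]
Alternatively, we can use the elementary bound $\int_0^{\pi g}|\psi_y|^2\ge g^{-2}\int|\psi|^2$ directly, with $g^{-2}\ge 1-2\alpha/(1+L^2)$. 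Combining the two pieces shows that $H\ge 1-\delta(L)-\varepsilon$ up to a finite-rank operator. Hence $\inf\sigma_{\rm ess}\ge 1-\delta-\varepsilon$, and letting $L\to\infty$ and $\varepsilon\to0$ gives $\inf\sigma_{\rm ess}\ge1$.

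The main obstacle is making the finite-rank/compactness step rigorous. Concretely, we need to show that the operator $\chi_1 H\chi_1$, restricted to the bounded region, contributes only finitely many eigenvalues below $1-\delta$. This follows from the min-max principle: the form restricted to functions supported in $\{|x|<L_\varepsilon+1\}\cap\Omega_f$ dominates a Dirichlet problem on a bounded domain, which has a compact resolvent. The magnetic term is harmless there because $\mathcal{A}$ is bounded.
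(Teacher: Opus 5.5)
Your proof is correct. The inclusion $[1,\infty)\subset\sigma_{\rm ess}$ is essentially the paper's argument. The paper uses the same Weyl functions $n^{-1/2}h(x/n)e^{ikx}\sin(y/g(x))$ with $\operatorname{supp}h\subset(1,2)$. Your variant, with real $k$ and disjoint supports pushed beyond $R+n$, is slightly cleaner: it covers every $\lambda\ge1$ (the paper writes $k\in\mathbb{Z}$) and makes weak convergence to zero immediate. Your remark that $\psi_n$ lies in the operator domain is also the right justification; the paper's claim that $\varphi_n\in C_0^\infty(\Omega_f)$ is not.

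The real difference is the reverse inclusion $\sigma_{\rm ess}\subset[1,\infty)$, which the paper never proves. Its proof ends with the Weyl sequence, and Theorem \ref{absence} cannot be used instead: the present theorem allows $B=0$ and does not assume $\alpha$ small, and in the field-free case there can be eigenvalues below $1$ (cf.\ the last theorem of the paper). Your IMS localization fills this gap correctly: a Dirichlet piece on a bounded region with compact resolvent, plus a finite-rank correction handled by min-max.

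For the exterior piece, use your elementary fibrewise bound $\int_0^{\pi g}|\partial_y u|^2\,dy\ge g^{-2}\int_0^{\pi g}|u|^2\,dy$ together with $(1+f)^{-2}\ge 1-2f$, rather than the route through $G_f^1,G_f^2$. The elementary bound is self-contained and needs only $f(x)\to0$ as $|x|\to\infty$. The constants in (\ref{I}), by contrast, bound $1/g$ by $1/\|g\|_\infty$ instead of $1/\inf g$ and would have to be corrected first. Also, with $\tilde A=0$ the factor is $1-G_f^1$, not $1-2G_f^1$.
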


\begin{proof}

To prove that any non-negative number $\mu\ge 1$ belongs to the essential
spectrum of $H_{\Omega_f}(\mathcal{A})$, we will use Weyl's criterion
\cite[Thm.~VII.12]{RS81}: we have to find a sequence
$\{\varphi_n\}_{n=1}^\infty\subset D(H_{\Omega_f}(\mathcal{A}))$ of unit vectors,
$\|\varphi_n\|=1$, which converges weakly to zero and
\begin{equation}\label{W}
\|H_{\Omega_f}(\mathcal{A})\varphi_n-\mu\varphi_n\|_{L^2(\Omega_f)}\to 0 \qquad\text{as}\quad n\to\infty
\end{equation}% ------------- %
holds. 
% ------------- %

For each $\mu= 1+ k^2,\,k\in\mathbb{Z}$ we will construct the sequence $\varphi_n\in C_0^\infty(\Omega_f)$ as follows
\begin{equation}\label{psi}
\varphi_n(x, y)= \sqrt{\frac{1}{n}}h\left(\frac{x}{n}\right) e^{i k x} \sin\left(\frac{y}{g(x)}\right),\quad n\in \mathbb{N},
\end{equation}
where $h\in C_0^\infty(\mathbb{R})$ is a smooth function with support in the interval $(1, 2)$ with $L^2$ norm equal to one and $g$ is defined by (\ref{g}).

We have 
\begin{eqnarray*}
(\varphi_n)_{x x}(x, y)= \frac{1}{\sqrt{n}}\biggl(\frac{1}{n^2}h''\left(\frac{x}{n}\right)\sin\left(\frac{y}{g(x)}\right)- \frac{2}{n}\frac{y g'(x)}{g(x)^2}h'\left(\frac{x}{n}\right) \cos\left(\frac{y}{g(x)}\right)\\- \frac{y^2 g'(x)^2}{g(x)^4} h\left(\frac{x}{n}\right) \sin\left(\frac{y}{g(x)}\right)- \frac{(y g''(x) g(x)-2 y g'(x)^2 )}{g(x)^3} h\left(\frac{x}{n}\right) \cos\left(\frac{y}{g(x)}\right)\\+ \frac{2i k}{n} h'\left(\frac{x}{n}\right) \sin\left(\frac{y}{g(x)}\right)- \frac{2i k y g'(x)}{g(x)^2} h\left(\frac{x}{n}\right) \cos\left(\frac{y}{g(x)}\right)- k^2 h\left(\frac{x}{n}\right) \sin\left(\frac{y}{g(x)}\right)\biggr) e^{i k x}\,,
\\ (\varphi_n)_{y y}(x, y)=- \frac{1}{g^2(x) \sqrt{n}} h\left(\frac{x}{n}\right) \sin\left(\frac{y}{g(x)}\right) e^{i k x}\,.
\end{eqnarray*}

Since the support of magnetic potential is a compact set then for large values of $n$ operator $H_{\Omega_f}(\mathcal{A})(\varphi_n)$ coincides with the Laplace operator applied at $\varphi_n$. Then using the assumptions (\ref{(a)}) and the above expressions one gets

\begin{eqnarray*}\nonumber
\int_{\Omega_f}\left|H_{\Omega_f}(\mathcal{A})\varphi_n- (k^2+1) \varphi_n\right|^2\,d x\,d y=
\int_{\Omega_f}\left|-\Delta \varphi_n- (k^2+1)\varphi_n\right|^2\,d x\,d y\\\nonumber=
\frac{1}{n} \int_n^{2n} \int_0^{\pi g(x)} \biggl|-\frac{1}{n^2}h''\left(\frac{x}{n}\right)\sin\left(\frac{y}{g(x)}\right)+ \frac{2}{n}\frac{y g'(x)}{g(x)^2}h'\left(\frac{x}{n}\right) \cos\left(\frac{y}{g(x)}\right)\\+ \frac{y^2 g'(x)^2}{g(x)^4} h\left(\frac{x}{n}\right) \sin\left(\frac{y}{g(x)}\right)+ \frac{(y g''(x) g(x)-2 y g'(x)^2 )}{g(x)^3} h\left(\frac{x}{n}\right) \cos\left(\frac{y}{g(x)}\right)\\- \frac{2i k}{n} h'\left(\frac{x}{n}\right) \sin\left(\frac{y}{g(x)}\right)+ \frac{2i k y g'(x)}{g(x)^2} h\left(\frac{x}{n}\right) \cos\left(\frac{y}{g(x)}\right)\\\nonumber+ k^2 h\left(\frac{x}{n}\right) \sin\left(\frac{y}{g(x)}\right) +\frac{1}{g(x)^2} h\left(\frac{x}{n}\right) \sin\left(\frac{y}{g(x)}\right)- (k^2+1)h\left(\frac{x}{n}\right) \sin\left(\frac{y}{g(x)}\right)\biggr|^2\,d x\,d y
\\\le 2\int_1^2 \int_0^\pi g(n x)\biggl| -\frac{1}{n^2}h''(x)\sin y+ \frac{2}{n}\frac{y g'(n x)}{g(n x)}h'(x)\cos y\\+ \frac{y^2 g'(n x)^2}{g(n x)^2} h(x) \sin y+ \frac{(y g''(n x) g(n x)-2 y g'(n x)^2 )}{g(n x)^2} h(x) \cos y\\- \frac{2i k}{n} h'(x) \sin y+ \frac{2i k y g'(n x)}{g(n x)} h(x) \cos y\biggr|^2\,d x\,d y\\+ 2 \int_1^2 \int_0^\pi g(n x) \left(1-\frac{1}{g(n x)^2}\right)^2 h(x)^2 \sin^2y\,d x\,d y\,.
\end{eqnarray*}

In view of the fact that $g'(x)= f(x)$ and $1-\frac{1}{g(n x)^2}= \frac{2 f(n x)+ f(n x)^2}{(1+ f(n x))^2}$ and assumptions (\ref{(a)}) it is easy to check that for large values of $n$ the right-hand side of the above estimate has the asymptotic behaviour $\mathcal{O}\left(\frac{1}{n^2}\right)$. 

Let us now estimate the $L^2$ norm of $\varphi_n$. In view of assumptions (\ref{(a)}) for large values of $n$ one has
\begin{eqnarray*}
\int_{\Omega_f}|\varphi_n|^2\,d x\,d y=\frac{1}{n} \int_n^{2n}\int_0^{\pi g(x)}\left|h\left(\frac{x}{n}\right)\right|^2 \left(\sin\left(\frac{y}{g(x)}\right)\right)^2\,d x\,d y\\=
\frac{1}{n} \int_n^{2n}\int_0^{\pi}g(x) \left|h\left(\frac{x}{n}\right)\right|^2 \sin^2y\,d x\,d y=
\\=
\int_1^2\int_0^{\pi}g(n x)|h(x)|^2 \sin^2y\,d x\,d y= \frac{\pi}{2} \int_1^2 g(n x)|h(x)|^2\,d x\\=
\frac{\pi}{2} \int_1^2 (1+ f(n x))|h(x)|^2\,d x= \frac{\pi}{2}(1+ o(1))\,.
\end{eqnarray*}

Hence the Weyl sequence satisfying (\ref{W}) we can choose $\frac{1}{\|\varphi_n\|_{L^2(\Omega_f)}}\varphi_n$.
This finishes the proof.

\end{proof}

\subsection{Non-emptiness of the discrete spectrum for non-magnetic Laplacian}
\label{non-empty}

In this section we discuss about the non-emptiness of the discrete spectrum of the non-magnetic Laplacian $H_{\Omega_f}(0)$.
In \cite{BGRS97} it was proven that in case if $f$ is a smooth compactly supported non-zero function then the discrete spectrum of $H_{\Omega_f}(0)$ below $1$ is always non-empty.

We will show the non-emptiness of the discrete spectrum for the class of perturbations without assuming for them to be compactly supported.

\begin{theorem}
Suppose the assumptions of Theorem \ref{stability} with the magnetic field be absent. Let us assume that the perturbation $f$ satisfies the following assumption
$$
|f'(x)|\le \frac{2 \sqrt{3}}{\sqrt{4\pi^2+ 3}}\sqrt{f(x)(2+ f(x))}\,,
$$
and there is at least one point where the above inequality is strict.
Then the discrete spectrum of $H_{\Omega_f}(0)$ below the threshold of the essential spectrum is non-empty.
 \end{theorem}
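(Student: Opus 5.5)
By Theorem \ref{stability} (with $\mathcal{A}=0$) the essential spectrum of $H_{\Omega_f}(0)$ is $[1,\infty)$. It therefore suffices, by the min-max principle, to find one $\psi\in\mathcal{H}^1_0(\Omega_f)$ with $q_{\Omega_f}^{0}[\psi]<\|\psi\|^2_{L^2(\Omega_f)}$. I will work in the straightened variables of the proof of Theorem \ref{absence}. The map $\psi\mapsto\varphi=g^{1/2}\psi(x,g(x)y)$ is unitary from $L^2(\Omega_f)$ onto $L^2(\Omega_0)$. Identity (\ref{q}) with $\tilde A=0$ gives
$$q_{\Omega_f}^{0}[\psi]-\|\psi\|^2=\int_{\Omega_0}\left(|\nabla\varphi|^2-|\varphi|^2\right)\,dx\,dy-I(\varphi).$$

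For the trial functions I take $\varphi_n(x,y)=h(x/n)\sin y$, where $h\in C_0^\infty(\mathbb{R})$ is real, $0\le h\le 1$ and $h=1$ on $(-1,1)$. Since $\int_0^\pi\cos^2y\,dy=\int_0^\pi\sin^2y\,dy$, the first integral reduces to $\frac{\pi}{2n}\int|h'|^2$, which is $\mathcal{O}(1/n)$.

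Next I pass to the limit in $I(\varphi_n)$, using that $\varphi_n$ is real. The terms containing $\varphi_x=n^{-1}h'(x/n)\sin y$ are bounded by $C n^{-1}\int|f'|\,dx\to0$, because $f'$ is integrable by (\ref{(a)}). In the remaining terms $h(x/n)^2\to1$ pointwise, and dominated convergence applies since $f,f'\in L^1$. This leaves
$$\lim_{n\to\infty}I(\varphi_n)=\int_{\mathbb{R}}\left[\frac{g'^2}{4g^2}\int_0^\pi\sin^2y\,dy-\frac{g'^2}{g^2}\int_0^\pi y^2\cos^2y\,dy+\Big(1-\frac1{g^2}\Big)\int_0^\pi\cos^2y\,dy\right]dx.$$
I will use $\int_0^\pi y^2\cos^2y\,dy=\frac{\pi^3}{6}+\frac{\pi}{4}$, together with $g=1+f$, $g'=f'$ and $g^2-1=f(2+f)$. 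With these, the integrand becomes
$$\frac{\pi}{2g^2}\left[f(2+f)-\frac{4\pi^2+3}{12}\,f'^2\right].$$

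The hypothesis is exactly $f'^2\le\frac{12}{4\pi^2+3}f(2+f)$, so this integrand is nonnegative. By continuity it is strictly positive on a neighbourhood of the point where the inequality is strict. Hence $\lim I(\varphi_n)=:\delta>0$, and $q_{\Omega_f}^0[\psi_n]-\|\psi_n\|^2\le \mathcal{O}(1/n)-\delta+o(1)<0$ for large $n$. This gives an eigenvalue below $1$.

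The main obstacle is the bookkeeping in the limit of $I(\varphi_n)$. I must check that the sign conventions of $I$ are those of (\ref{q}); note that the $y\,\varphi_x\varphi_y$ term vanishes in the limit. I must also verify the integral $\int_0^\pi y^2\cos^2y\,dy$, since the constant $\frac{2\sqrt3}{\sqrt{4\pi^2+3}}$ in the statement depends on it exactly.
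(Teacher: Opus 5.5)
Your proof is correct, and the two points you flagged check out. For $\tilde A=0$ and real $\varphi$, identity (\ref{q}) with the displayed $I(\varphi)$ is right, and $\int_0^\pi y^2\cos^2y\,dy=\frac{\pi^3}{6}+\frac{\pi}{4}$. The limiting integrand is indeed $\frac{\pi}{2g^2}\bigl[f(2+f)-\frac{4\pi^2+3}{12}f'^2\bigr]$. The two $\varphi_x$-terms even cancel exactly, because $\int_0^\pi y\sin y\cos y\,dy=-\frac{\pi}{4}$.

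Your trial functions are in fact the paper's. Your $\psi_n=g^{-1/2}h(x/n)\sin(y/g(x))$ is its $r(x)\sin(y/g(x))$ with $v=r\sqrt g=h(x/n)$. Your integrand is $-\frac{\pi}{2}V$, where $V=\frac{(4\pi^2+3)f'^2}{12(1+f)^2}-\frac{2f+f^2}{(1+f)^2}$ is the paper's effective potential.

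The routes differ only in the last step. The paper computes directly in $\Omega_f$ and gets the exact reduction $q^0_{\Omega_f}[\varphi]-\|\varphi\|^2=\frac{\pi}{2}\bigl(\int v'^2+\int Vv^2\bigr)$ for every $v$. It then invokes Simon's weak-coupling theorem \cite{S76} for a negative eigenvalue of $-\frac{d^2}{dx^2}+V$ and takes $v$ to be its eigenfunction. You instead recycle the straightening identity from the proof of Theorem \ref{absence} and insert the explicit plateau sequence $v=h(x/n)$. That is exactly the elementary half of Simon's result, the case $\int V<0$.

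Your version is self-contained: it needs only $V\in L^1$ and no eigenfunction. It also proves the conclusion under the weaker integral hypothesis $\int_{\mathbb R}g^{-2}\bigl[f(2+f)-\frac{4\pi^2+3}{12}f'^2\bigr]dx>0$ in place of the pointwise inequality. The paper's exact one-dimensional reduction has a different advantage: any bound-state criterion for $-\frac{d^2}{dx^2}+V$, including the borderline case $\int V=0$, $V\not\equiv 0$, transfers verbatim to $H_{\Omega_f}(0)$.
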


\begin{proof}
Let us consider function
$$
\varphi(x, y)= r(x) \sin\left(\frac{y}{g(x)}\right),
$$
where as usual $g$ is given by (\ref{g}) and $r$ is a smooth real-valued function to be chosen later. 
We are going to show that 
\begin{equation}\label{aim}
q_{\Omega_f}^0[\varphi]- \|\varphi\|_{L^2(\Omega_f)}^2< 0\,.
\end{equation}

We have

\begin{eqnarray*}
\varphi_x(x, y)= r'(x) \sin\left(\frac{y}{g(x)}\right)- \frac{g'(x) y}{g(x)^2} r(x) \cos\left(\frac{y}{g(x)}\right)\,,\\
\varphi_y(x, y)= \frac{1}{g(x)} r(x) \cos\left(\frac{y}{g(x)}\right)\,.
\end{eqnarray*}

Hence 

\begin{eqnarray*}\nonumber
q_{\Omega_f}^0[\varphi]- \|\varphi\|_{L^2(\Omega_f)}^2\\=\int_{\Omega_f}\left| r'(x) \sin\left(\frac{y}{g(x)}\right)- \frac{g'(x) y}{g(x)^2} r(x) \cos\left(\frac{y}{g(x)}\right)\right|^2\,d x\,d y
+ \int_{\Omega_f}\left| \frac{1}{g(x)} r(x) \cos\left(\frac{y}{g(x)}\right)\right|^2\,d x\,d y\\-  \int_{\Omega_f} r(x)^2 \sin^2\left(\frac{y}{g(x)}\right)\,d x\,d y\\\nonumber=
\int_{\Omega_f}\biggl(r'(x)^2 \sin^2\left(\frac{y}{g(x)}\right)- \frac{g'(x) y}{g(x)^2} r(x) r'(x) \sin\left(\frac{2y}{g(x)}\right)
\\\label{last1}+ \frac{g'(x)^2 y^2}{g(x)^4} r(x)^2 \cos^2\left(\frac{y}{g(x)}\right)
+ \frac{ r(x)^2}{g(x)^2} \cos^2\left(\frac{y}{g(x)}\right) -r(x)^2 \sin^2\left(\frac{y}{g(x)}\right)\biggr)\,d x\,d y\\
= \int_{\mathbb{R}} \int_0^{\pi g(x)}\biggl(r'(x)^2 \sin^2\left(\frac{y}{g(x)}\right)- \frac{g'(x) y}{g(x)^2} r(x) r'(x) \sin\left(\frac{2y}{g(x)}\right)
\\+ \frac{g'(x)^2 y^2}{g(x)^4} r(x)^2 \cos^2\left(\frac{y}{g(x)}\right)
+ \frac{ r(x)^2}{g(x)^2} \cos^2\left(\frac{y}{g(x)}\right) -r(x)^2 \sin^2\left(\frac{y}{g(x)}\right)\biggr)\,d x\,d y\\
=  \int_{\mathbb{R}} \int_0^{\pi} g(x) \biggl(r'(x)^2 \sin^2y- \frac{g'(x) y}{g(x)} r(x) r'(x) \sin(2 y)
\\+ \frac{g'(x)^2 y^2}{g(x)^2} r(x)^2 \cos^2y
+ \frac{ r(x)^2}{g(x)^2} \cos^2y -r(x)^2 \sin^2y\biggr)\,d x\,d y
\,.
\end{eqnarray*}

From the straightforward calculations and the integration by parts the above expression performs
 
\begin{eqnarray*}
q_{\Omega_f}^0[\varphi]- \|\varphi\|_{L^2(\Omega_f)}^2\\= \frac{\pi}{2} \int_{\mathbb{R}}\left( r'(x)^2 g(x)+ r(x)^2 \left(-\frac{g''(x)}{2}+ \frac{g'(x)^2 (2\pi^2+ 3)}{6g(x)}+ \frac{1}{g(x)}- g(x)\right)\right)\,d x\,d y\,.
\end{eqnarray*}

Finally with $v(x):= r(x) \sqrt{g(x)}$ we rewrite 

\begin{eqnarray}\nonumber
q_{\Omega_f}^0[\varphi]- \|\varphi\|_{L^2(\Omega_f)}^2\\\label{latter}= \frac{\pi}{2} \left(\int_{\mathbb{R}} v'(x)^2\,d x+ \int_{\mathbb{R}}v(x)^2 \left(\frac{g'(x)^2 (4\pi^2+ 3)}{12g(x)^2}+ \frac{1}{g(x)^2}- 1\right)\,d x\right)\,.
\end{eqnarray}

In view of (\ref{g})  
$$
\frac{g'(x)^2 (4\pi^2+ 3)}{12g(x)^2}+ \frac{1}{g(x)^2}- 1= \frac{f'(x)^2 (4\pi^2+ 3)}{12(1+ f(x))^2}- \frac{2f(x)+ f(x)^2}{(1+ f(x))^2}\,. 
$$

The latter is non-positive and non-trivial due to the assumptions of the theorem and then thanks to work \cite{S76} the operator $-\frac{d^2}{d x^2} +\left(\frac{g'(x)^2 (4\pi^2+ 3)}{12g(x)^2}+ \frac{1}{g(x)^2}- 1\right)$ has at least one negative bound state. Let us choose $v(x)$ be the eigenfunction corresponding to this negative eigenvalue, then by virtue of (\ref{latter}) we obtain the validity of (\ref{aim}).

\end{proof}

\section{Appendix}
\label{appendix}
 
Let us construct 
\begin{eqnarray*}
\hat a_1 (x, y) = - y \int_0^1 \hat B(u x, u y )\, u\,d u,\\ \hat a_2 (x, y) = x \int_0^1 \hat B(u x, u y)\, u\, d u. 
\end{eqnarray*}

One can check that indeed the magnetic potential $\hat A= (\hat a_1, \hat a_2)$ corresponds to magnetic field $\hat B$ as follows
\begin{eqnarray*}
(\hat a_2)_x - (\hat a_1)_y= \int_0^1 \hat B(u x, u y )\, u\,d u+ x  \int_0^1 (\hat B(u x, u y ))_x\, u\,d u+
\int_0^1 \hat B(u x, u y )\, u\,d u\\+  y \int_0^1 (\hat B(u x, u y ))_y\, u\,d u
=2 \int_0^1 \hat B(u x, u y )\, u\,d u+ \int_0^1 (\hat B(u x, u y ))_u\, u^2\,d u=
\hat B(x, y)\,.
\end{eqnarray*}

In view of the fact that the support of the magnetic field is a compact set one can notice also the boundedness of $\hat a_1$ and $\hat a_2$.

%%%%%%%%%%%%%%%%%%%%%%%%%%%%%%%%%%%%%%%%%%%%%%
\subsection*{Acknowledgements}
D.A. thanks the Foster G. and Mary McGaw Professorship in Mathematical Sciences, which supported this research. 

\bigskip

\subsection*{Conflict of Interest}
The authors declare that they have no competing interests related to the publication of this paper.

\subsection*{Authors' Contributions}
All authors have contributed equally to the manuscript and have drafted, read, and approved the final version of the manuscript, which is the result of an intensive collaboration.


\begin{thebibliography}{10}

\bibitem{BBS24} J.~Bory-Reyes, D.~Barseghyan, B.~ Schneider, Magnetic Schr\"{o}dinger operator with the potential supported in a curved two-dimensional strip,  Mediterranean Journal of Mathematics 21(3) (2024), 1--15. 

\bibitem{BBSZ25} D.~ Barseghyan, S.~ Bernstein, B.~ Schneider
and M.~ Zimmermann, Magnetic Dirichlet Laplacian in curved waveguides, Opuscula Math. 45, no. 3 (2025), 293--305.

\bibitem{BE21} D.~Barseghyan, P.~Exner, Magnetic field influence on the discrete spectrum of locally deformed leaky wires, Reports on Mathematical Physics 88 (1) (2015), 47--57.

\bibitem{BEGK01} D.~ Borisov, P. ~Exner, R.R. ~Gadyl’shin and D.~Krej\v ci\v
 r\'{\i}k, Bound states in weakly deformed strips and layers, Ann. Henri  Poincar\'e 2 (2001), 553--572.

\bibitem{BGRS97} W.~Bulla, F.~Gesztesy, W.~Renger and B.~Simon,
Weakly coupled bound states in quantum waveguides,
Proc. Amer. Math. Soc. 125 (1997),  no. 5, 1487--1495.

\bibitem{DE95} P. ~Duclos and P. ~Exner, Curvature-induced bound states in quantum waveguides in two and three dimensions, Rev. Math. Phys. 7 (1995), 73--102.

\bibitem{EK05} T.~Ekholm, H.~Kova\v{r}\'{\i}k, Stability of the Magnetic Schr\"{o}dinger Operator in a Waveguide,
Communications in Partial Differential Equations 30 (2005), 539--565.

\bibitem{EK15} P.~Exner, H.~Kova\v{r}\'{\i}k, Quantum Waveguides, Springer International, Heidelberg 2015.

\bibitem{ES89} P.~Exner, P.~\v{S}eba,
Bound states in curved quantum wavequides, J. Math. Phys. 30 (1989), 2574--2580.

\bibitem{GJ92} J. ~Goldstone and R.L.~ Jaffe, Bound states in twisting tubes, Phys. Rev. B45 (1992), 14100--14107.

\bibitem{K66} T.~ Kato, Perturbation Theory for Linear Operators, Springer, Berlin-Heidelberg-New York, 1966.
  
\bibitem{KR14} D.~ Krej\v{c}i\v{r}\'{\i}k, N. ~Raymond, Magnetic effects in curved quantum waveguides,
Ann. Henri Poincare 15 (2014), 1993--2024.
 
\bibitem{LCM99} J.T.~ Londergan, J.P.~ Carini, D.P.~ Murdock: Binding and Scattering in Two-Dimensional
Systems. Applications to Quantum Wires, Waveguides and Photonic Crystals, Springer
LNP m60, Berlin 1999.  
  
\bibitem{RS81} M.~ Reed, B. ~Simon, Methods of Modern Mathematical Physics, I. Functional Analysis, II. Fourier Analysis, IV. Analysis of Operators. Self-Adjointness, Academic Press, New York 1981, 1975, 1978.


\bibitem{S76} B.~Simon, The Bound State of Weakly Coupled Schr\"{o}dinger Operators in
One and Two Dimensions, Ann. of Physics 97 (1976) 279--288.
\end{thebibliography}
\end{document}